\newtheorem{theorem}{Theorem}
\newtheorem{lemma}[theorem]{Lemma}
\newtheorem{proposition}[theorem]{Proposition}
\newtheorem{definition}[theorem]{Definition}
\newtheorem{remark}[theorem]{Remark}
\newtheorem{example}[theorem]{Example}
\newtheorem{corollary}[theorem]{Corollary}
\title{Commutators on generalized block-triangular algebras}
\author{Pedro Souza Fagundes}
\address{Departamento de Matem\'atica, Universidade Federal de S\~ao Carlos, 13565-905, S\~ao Carlos, S\~ao Paulo, Brazil}
\email{pedrofagundes@ufscar.br}
\author{Thiago Castilho de Mello}
\address{Universidade Federal de S\~ao Paulo, Instituto de Ci\^encia e Tecnologia}
\email{tcmello@unifesp.br}
\begin{document}

\begin{abstract}
    The characterization of commutators in associative algebras is a classical problem in ring theory. In this paper, we address this problem for the natural class of generalized block-triangular algebras. To this end, we introduce a new invariant: \emph{the multitrace} of an arbitrary element in an associative unital algebra, and prove that in a generalized block-triangular algebra, an element is a commutator if and only if its multitrace vanishes. As a consequence, we show that the set of commutators is closed under addition in these algebras. Our main result extends the classical Albert–Muckenhoupt–Shoda theorem for full matrix algebras to the broader setting of generalized block-triangular algebras.
\end{abstract}

\keywords{Commutators, Traces, Multitrace, Generalized block-triangular algebras, Sylvester-Rosenblum equation}

\subjclass{Primary 08A05, 16S50; Secondary 15A15, 15A24, 15B99, 16R10,  16U99}

\maketitle

\section{Introduction}

Additive commutators are of fundamental importance in understanding the structure of an algebra. For instance, a semiprimitive ring is commutative if and only if all additive commutators lie in its center \cite[Theorem 12.11]{Lam}. It is also well known that a division ring is generated as a noncommutative division algebra over its center by its set of commutators \cite[Corollary 3.16]{Lam}.

In this paper, we study commutators in a class of finite-dimensional algebras. The well-known Albert-Muckenhoupt-Shoda Theorem (see \cite{Shoda} and \cite{AM}) states that for an arbitrary field $K$, a matrix $a \in M_n(K)$ is a commutator if and only if $a$ has trace zero, i.e., $a \in \mathfrak{sl}_n(K)$ (see also \cite{Trace0} for a simplified proof). A similar result holds for the algebra of upper triangular matrices: a matrix $a \in UT_n(K)$ is a commutator if and only if all its diagonal entries are zero.

These two algebras are special cases of a more general class known as algebras of block-triangular matrices. Denote by $UT(d_1,\dots,d_m, K)$ the algebra of block-triangular matrices over a field $K$. This is the subalgebra of $M_n(K)$, where $n = d_1 + \cdots + d_m$, consisting of matrices with zero entries below the diagonal blocks of sizes $d_1, \dots, d_m$.

For the algebra of block-triangular matrices, an analogous result holds: a matrix $a \in UT(d_1,\dots,d_m; K)$ is a commutator if and only if the trace of each of its diagonal blocks is zero. This is a particular case of our main result (Theorem \ref{main theorem}) when $K$ is algebraically closed; however, it holds over infinite fields with essentially the same proof.

As a consequence, any sum of commutators in $UT(d_1,\dots,d_m; K)$ is itself a commutator. This fact is related to problems concerning the images of polynomials on algebras (see, for instance, \cite{survey} for problems related to polynomial images and the Lvov-Kaplansky conjecture). Indeed, it implies that for these algebras, the image of the commutator polynomial is a vector space. On the other hand, sum of commutators is not always again a commutator. For instance in \cite{Makar-Limanov} Makar-Limanov constructed an example of a division skew-field $D$ where every element in $D$ is a sum of commutators, but $D$ contains elements which are not commutators.  

In this paper, we define a broader class of algebras called generalized block-triangular algebras. This class contains the algebras $UT(d_1,\dots,d_m, K)$ over an algebraically closed field of characteristic zero. We show that commutators in these algebras can be characterized in a similar way, using the notion of multitrace zero elements. This definition is based on the Wedderburn-Malcev decomposition of a finite-dimensional associative algebra, which we present in the next section.

\section{Multitrace zero elements}

Let $K$ be an algebraically closed field of characteristic zero, and let $A$ be a finite-dimensional unital associative algebra over $K$. By the well-known Wedderburn-Malcev Theorem, there exists a semisimple subalgebra $B$ of $A$ such that
\[
A = B \oplus \operatorname{rad}(A)
\]
where $\operatorname{rad}(A)$ is the Jacobson radical of $A$ and the direct sum is of vector spaces.

Moreover, since $B$ is semisimple and $K$ is algebraically closed, the Wedderburn-Artin Theorem states that $B$ is isomorphic, as an algebra, to a direct sum of matrix algebras:
\[
B \simeq M_{d_1}(K) \oplus \cdots \oplus M_{d_r}(K),
\]
for some $d_1, \dots, d_r \in \mathbb{N}$.

Related to the decomposition of $A$ above, in what follows we introduce the notion of multitrace. It will be based on the concept of a multiset. This is essentially a set equipped with a map to positive integers assigning to each element its multiplicity. We will use the same notations of usual sets to denote multisets. For instance, $\{1, 1, 1,2,5,5\}$ is the multiset with elements $1, 2$ and $5$ in which the multiplicities are respectively $3$, $1$ and $2$.

\begin{definition}
    Let $A$ be a finite-dimensional unital algebra over an algebraically closed field $K$ of characteristic zero, and let $A = B \oplus \operatorname{rad}(A)$ be its Wedderburn-Malcev decomposition. For an element $a = b + j \in A$ with $b \in B$ and $j \in \operatorname{rad}(A)$, the \emph{multitrace} of $a$ is the multiset
    \[
    \operatorname{mtr}(a) = \{ \operatorname{tr}(b_1), \operatorname{tr}(b_2), \dots, \operatorname{tr}(b_r) \},
    \]
    where $b$ is mapped to $(b_1, \dots, b_r) \in M_{d_1}(K) \oplus \cdots \oplus M_{d_r}(K)$ under the isomorphism. We say that $a$ has \emph{multitrace zero} if $\operatorname{tr}(b_i) = 0$ for all $i = 1, \dots, r$. In this case, we denote simply $\operatorname{mtr}(a) = 0$.
\end{definition}

\begin{remark}
The notion of the multitrace is well-define. In particular, the property of having multitrace zero is independent of the choice of the semisimple part $B$ and the order of the components.
\end{remark}
Indeed, the definition depends \emph{a priori} on the choice of the semisimple subalgebra $B$. However, in a multiset, the ordering is irrelevant and by the uniqueness part of the Wedderburn-Malcev theorem \cite[Theorem 72.19]{Curtis-Reiner}, if $A = B' \oplus \operatorname{rad}(A)$ is another decomposition with $B'$ semisimple, there exists an element $r \in \operatorname{rad}(A)$ such that $B' = (1 + r)^{-1}B(1 + r)$; that is, $B'$ is the image of $B$ under an inner automorphism of $A$ given by conjugation by the unit $1 + r$. This automorphism permutes the simple components of $B$ and maps each component onto an isomorphic simple component of $B'$. The restriction of this automorphism to a simple component $M_{d_i}(K)$ of $B$ is an automorphism of $M_{d_i}(K)$ and, by the Skolem-Noether Theorem, it is inner. Since inner automorphisms of matrix algebras preserve traces, the multiset of traces $\{\operatorname{tr}(b_1), \dots, \operatorname{tr}(b_r)\}$ is invariant under the automorphism between $B$ and $B^\prime$. Consequently, the multitrace of an element, and in particular the property of having multitrace zero, is well-defined.

Let us now examine this notion for some known algebras.
\begin{itemize}
    \item If $A = UT_n(K)$ is the algebra of upper triangular matrices over $K$, we have $A = B \oplus \operatorname{rad}(A)$, where $B \simeq K^n$ (the algebra of diagonal matrices) and $\operatorname{rad}(A)$ is the ideal of strictly upper triangular matrices. It is easy to check that for $a \in UT_n(K)$,
    \[
    \operatorname{mtr}_B(a) = 0 \quad \Leftrightarrow \quad a \in \operatorname{rad}(A).
    \]
    \item If $A$ is semisimple, then $\operatorname{rad}(A) = 0$ and $A \simeq M_{n_1}(K) \oplus \cdots \oplus M_{n_r}(K)$. In this case, the multitrace zero elements of $A$ are exactly those mapped to $\mathfrak{sl}_{n_1}(K) \oplus \cdots \oplus \mathfrak{sl}_{n_r}(K)$.
\end{itemize}

Let us have a deeper look at these two examples. To that, let us denote by $c(A)$ the set of commutators in the algebra $A$, i.e., \[c(A) = \{[a,b]\; a, b \in A\}.\] In the first example above, $c(A)$ coincides with $\operatorname{rad}(A)$. In the second one, as a consequence of the Albert-Muckenhoupt-Shoda Theorem, $c(A) = \mathfrak{sl}_{n_1}(K) \oplus \dots \oplus \mathfrak{sl}_{n_r}(K)$. In both cases, the set of commutators is a vector space and can be described as the set of elements with multitrace zero. This discussion naturally leads to the following question: For a finite-dimensional unital algebra $A$, is the set of commutators equal to the set of multitrace zero elements in $A$? The next lemma tells us that we have at least one inclusion.

\begin{lemma}\label{commutator implies multitrace zero}
    Every commutator in $A$ has multitrace zero.
\end{lemma}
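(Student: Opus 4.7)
The plan is to fix an arbitrary commutator $[x,y] \in A$ and extract its semisimple component with respect to the Wedderburn–Malcev decomposition $A = B \oplus \operatorname{rad}(A)$, then invoke the classical trace-zero property of commutators in each matrix block of $B$.

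First, I would write $x = b_x + j_x$ and $y = b_y + j_y$ with $b_x, b_y \in B$ and $j_x, j_y \in \operatorname{rad}(A)$, and expand bilinearly:
\[
[x,y] = [b_x, b_y] + [b_x, j_y] + [j_x, b_y] + [j_x, j_y].
\]
Since $\operatorname{rad}(A)$ is a two-sided ideal of $A$, each of the last three summands lies in $\operatorname{rad}(A)$, while $[b_x, b_y] \in B$. By the uniqueness of the direct-sum decomposition $A = B \oplus \operatorname{rad}(A)$ of vector spaces, $[b_x, b_y]$ is precisely the semisimple part of $[x,y]$; this is the object whose multitrace we must compute.

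Next, I would transport along the isomorphism $B \simeq M_{d_1}(K) \oplus \cdots \oplus M_{d_r}(K)$. If $b_x \mapsto (b_{x,1},\dots,b_{x,r})$ and $b_y \mapsto (b_{y,1},\dots,b_{y,r})$, then because the map is an algebra homomorphism we have
\[
[b_x,b_y] \mapsto \bigl([b_{x,1},b_{y,1}],\,\dots,\,[b_{x,r},b_{y,r}]\bigr).
\]
Each coordinate is a commutator in a full matrix algebra $M_{d_i}(K)$, and hence has trace zero by the elementary identity $\operatorname{tr}(uv) = \operatorname{tr}(vu)$. Therefore $\operatorname{tr}([b_{x,i},b_{y,i}]) = 0$ for every $i$, which is exactly the statement that $\operatorname{mtr}([x,y]) = 0$.

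I do not anticipate a genuine obstacle: the argument is bookkeeping with the Wedderburn–Malcev splitting, combined with two standard facts (the radical is an ideal, so mixed commutators fall inside it; and matrix commutators are traceless). The only point that deserves care is verifying that the notion is unambiguous, but that is already handled by the remark preceding the lemma, so we may freely work with any fixed choice of $B$ and of the isomorphism to a product of matrix algebras.
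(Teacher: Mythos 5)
Your proposal is correct and follows the same route as the paper: split along the Wedderburn--Malcev decomposition, observe that the mixed terms fall into $\operatorname{rad}(A)$ because it is an ideal, and reduce to the tracelessness of commutators in each matrix block of $B$. You simply spell out the bilinear expansion and the componentwise image under the isomorphism in slightly more detail than the paper does; there is no substantive difference.
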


\begin{proof}
    Let $A = B \oplus \operatorname{rad}(A)$ and let $[a_1, a_2] \in [A, A]$. Write $a_1 = b_1 + j_1$ and $a_2 = b_2 + j_2$ where $b_1, b_2 \in B$ and $j_1, j_2 \in \operatorname{rad}(A)$. Then
    \[
    [a_1, a_2] = [b_1, b_2] + j
    \]
    for some $j \in \operatorname{rad}(A)$. Since $B$ is semisimple, $[b_1, b_2]$ is mapped into $\mathfrak{sl}_{n_1}(K) \oplus \cdots \oplus \mathfrak{sl}_{n_r}(K)$ under the isomorphism $B \simeq M_{d_1}(K) \oplus \cdots \oplus M_{d_r}(K)$. Therefore, $[a_1, a_2]$ has multitrace zero.
\end{proof}

In general, the converse of Lemma \ref{commutator implies multitrace zero} is not true, as shown in the next example.

\begin{example}\label{example0}
Consider the algebra
\[
    A = \left\{ \begin{pmatrix}
        a & c & d \\
        0 & a & 0 \\
        0 & 0 & b
    \end{pmatrix} \middle| a, b, c, d \in K \right\},
\]
with the usual matrix operations. It is easy to check that
\[
    A = B \oplus \operatorname{rad}(A),
\]
where $B = K(e_{11} + e_{22}) \oplus Ke_{33}$ is a 2-dimensional semisimple subalgebra and $\operatorname{rad}(A) = Ke_{12} + Ke_{13}$.
Since $B \simeq K \times K$, the set of multitrace zero elements of $A$ coincides with $\operatorname{rad}(A)$. On the other hand, a straightforward computation shows that $c(A) = K e_{13}$.
\end{example}

In light of the above example, we reformulate the question as follows: For which classes of algebras the set of  multitrace zero elements coincide with the set of commutators?

We note that in algebras $A$ satisfying the above condition, the sum of two commutators is again a commutator, since the sum of multitrace zero elements is again a multitrace zero element, in particular, the set of commutators $c(A)$ would be a vector space. 

Our goal in this paper is to present a large class of algebras that answers this question positively. But first, we need to introduce our main technique: the module-theoretic version of the Sylvester-Rosenblum equation.

\section{The module-theoretic Sylvester-Rosenblum equation}

We present here the module-theoretic version of the Sylvester-Rosenblum equation. The proof follows essentially the same argument as in \cite{BhatiaRosenthal}, but is formulated in the language of modules. In this section, $K$ is an arbitrary field.

Let $a \in A$ and let $M$ be a left $A$-module. We denote by $L_a$ the left multiplication map on $M$ by $a$; that is,
\[
L_a(x) = a x, \quad \text{for all } x \in M.
\]
Similarly, we define the right multiplication map $R_a$ on a right $A$-module $M$. Note that both $L_a$ and $R_a$ are linear operators on $M$ for all $a \in A$.

Recall that if $A$ and $B$ are $K$-algebras, a $K$-vector space $M$ is an $(A,B)$-bimodule if $M$ is both a left $A$-module and a right $B$-module and satisfies the compatibility condition $(a m) b = a (m b)$ for all $a \in A$, $b \in B$, and $m \in M$.

In the next theorem, we denote by $\operatorname{Spec}(L)$ the spectrum of the linear operator $L$ over a finite-dimensional vector space.

\begin{theorem}\label{sylvester}
Let $A$ and $B$ be finite-dimensional $K$-algebras, and let $M$ be an $(A,B)$-bimodule which is finite-dimensional as a vector space over $K$. Given $a \in A$ and $b \in B$, the equation
\[
a x - x b = c
\]
has a unique solution $x \in M$ for any $c \in M$ if $\operatorname{Spec}(L_a) \cap \operatorname{Spec}(R_b) = \emptyset$ in the algebraic closure of $K$.
\end{theorem}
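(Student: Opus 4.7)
The plan is to recast the equation $ax - xb = c$ as $T(x) = c$, where $T = L_a - R_b$ is a $K$-linear operator on $M$. Because $M$ is finite-dimensional, proving the theorem reduces to showing that $T$ is injective under the spectral hypothesis. The bimodule axiom $a(xb) = (ax)b$ translates exactly into the commutation $L_a R_b = R_b L_a$ on $M$; this is the only place the bimodule structure is really used, and it is what allows spectra of $L_a$ and $R_b$ to interact cleanly.

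The key step will be a polynomial argument. Let $p(t) \in K[t]$ be the characteristic polynomial of $L_a$, so $p(L_a) = 0$ on $M$. Since $a \mapsto L_a$ is a $K$-algebra homomorphism from $A$ to $\operatorname{End}_K(M)$, we have $p(L_a) = L_{p(a)}$ and therefore $p(a) \cdot x = 0$ for every $x \in M$. Now suppose $T(x) = 0$, i.e.\ $ax = xb$. A direct induction yields $a^n x = x b^n$ for all $n \geq 0$, and linear extension gives $p(a) x = x p(b)$. Combining with the previous identity produces $x p(b) = 0$, that is, $R_{p(b)}(x) = 0$.

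To identify $R_{p(b)}$ with $p(R_b)$, note that although $b' \mapsto R_{b'}$ is in general an anti-homomorphism, its restriction to the commutative subalgebra $K[b]$ is an honest homomorphism, so $R_{p(b)} = p(R_b)$. It remains to show $p(R_b)$ is invertible. This is where the spectral hypothesis enters: by the spectral mapping theorem applied over $\overline{K}$, $\operatorname{Spec}(p(R_b)) = p(\operatorname{Spec}(R_b))$. The roots of $p$ (in $\overline{K}$) are precisely the eigenvalues of $L_a$, which by assumption are disjoint from $\operatorname{Spec}(R_b)$; hence $p(\beta) \neq 0$ for every $\beta \in \operatorname{Spec}(R_b)$, and $0 \notin \operatorname{Spec}(p(R_b))$. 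Thus $p(R_b)$ is invertible on $M$, forcing $x = 0$, so $T$ is injective and the equation admits a unique solution for every $c$.

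The main obstacle is really just careful bookkeeping around the fact that $L$ is an algebra homomorphism while $R$ is an anti-homomorphism, so that the polynomial identities $p(L_a) = L_{p(a)}$ and $p(R_b) = R_{p(b)}$ are correctly justified; along with the observation that, because $p$ has coefficients in $K$ but its roots live in $\overline{K}$, the spectral mapping theorem must be applied after extending scalars. Once this is cleanly arranged, the rest is elementary linear algebra.
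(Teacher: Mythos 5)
Your proof is correct and follows the same skeleton as the paper's: reduce to injectivity of $L_a-R_b$, derive $a^kx=xb^k$ by induction, conclude $p(R_b)(x)=p(L_a)(x)=0$ for $p$ the characteristic polynomial of $L_a$, and finish by showing $p(R_b)$ is invertible. The only genuine divergence is in that last step. The paper observes that the disjointness of the spectra makes $p$ coprime to the characteristic polynomial $q$ of $R_b$, writes $pr+qs=1$, and uses Cayley--Hamilton for $R_b$ to get $p(R_b)r(R_b)=I$; you instead invoke the spectral mapping theorem over $\overline{K}$ to see that $0\notin\operatorname{Spec}(p(R_b))=p(\operatorname{Spec}(R_b))$. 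Both are valid; the B\'ezout route stays entirely inside $K[t]$ (coprimality over $\overline{K}$ descends to $K$), while yours requires a base change to $\overline{K}$ before applying spectral mapping, together with the remark that invertibility is insensitive to scalar extension --- which you do flag. One small stylistic caveat: your detour through $p(L_a)=L_{p(a)}$ and $R_{p(b)}=p(R_b)$ tacitly treats $p(a)$ and $p(b)$ as elements of $A$ and $B$, which needs these algebras (and the module actions) to be unital to handle the constant term of $p$; this is avoidable, since Cayley--Hamilton applies directly to the operator $L_a$ and the identity $p(L_a)(x)=p(R_b)(x)$ follows from $L_a^k(x)=R_b^k(x)$ by linearity alone, which is how the paper phrases it.
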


\begin{proof}
We first rewrite the equation as
\begin{equation}\label{LR}
(L_a - R_b) x = c.
\end{equation}
Since $M$ is a finite-dimensional vector space over $K$, fixing a basis for $M$, equation \eqref{LR} has a unique solution if and only if the linear operator $L_a - R_b$ is invertible; that is, if $(L_a - R_b) x = 0$ implies $x = 0$.

Now, $(L_a - R_b) x = 0$ implies $a x = x b$, and by induction we obtain $a^k x = x b^k$ for all $k \geq 1$. Therefore, $L_a^k(x) = R_b^k(x)$ for all $k \geq 1$, and more generally,
\[
p(L_a)(x) = p(R_b)(x)
\]
for any polynomial $p \in K[t]$. In particular, if $p$ is the characteristic polynomial of $L_a$ and $q$ is the characteristic polynomial of $R_b$, we have by the Cayley-Hamilton Theorem
\begin{equation}\label{pRb}
p(R_b)(x) = p(L_a)(x) = 0.
\end{equation}

Since $\operatorname{Spec}(L_a) \cap \operatorname{Spec}(R_b) = \emptyset$, the polynomials $p$ and $q$ are coprime. Thus, there exist polynomials $r$ and $s$ with coefficients in $K$ such that $p r + q s = 1$. Applying this identity to $R_b$, we get
\[
p(R_b) r(R_b) + q(R_b) s(R_b) = I,
\]
but, again by the Cayley-Hamilton Theorem, $q(R_b) = 0$, so $p(R_b) r(R_b) = I$. In particular, the linear operator $p(R_b)$ is invertible, so equation \eqref{pRb} yields $x = 0$. Therefore, $L_a - R_b$ is invertible, and the uniqueness of $x$ follows.
\end{proof}

Next, we present an application of the module-theoretic Sylvester-Rosenblum equation addressed to the L'vov-Kaplansky conjecture, which might be of independent interest.

\subsection{Images of polynomials on triangular algebras}

We start this section by recalling that if $f(x_1,\dots,x_n) \in K\langle x_1,\dots,x_n \rangle$ is a polynomial in the free associative algebra of rank $n$, then its image on an associative $K$-algebra $A$, denoted by $f(A)$,  is the set of all evaluations of $f$ on $A$.

As an application of Theorem \ref{sylvester}, we show that the image of a multilinear polynomial of degree two on a finite-dimensional triangular algebra is always a vector space. Recall that a $K$-algebra $T = A \oplus B \oplus M$ is a triangular algebra if there exist algebras $A$ and $B$ and an $(A,B)$-bimodule $M$ such that $T$ is isomorphic to the algebra $\begin{pmatrix}
    A & M \\ 0 & B
\end{pmatrix}$ with its natural matrix operations.

\begin{theorem}
Let $T = A \oplus B \oplus M$ be a finite-dimensional triangular algebra with unit over an infinite field $K$ and let $f \in K\langle x,y \rangle$ be a multilinear polynomial of degree two. Then $f(T) = f(A) \oplus f(B) \oplus M$.
\end{theorem}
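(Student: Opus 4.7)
The plan is to check the easy forward inclusion by a direct computation and handle the reverse inclusion via a short case analysis, where the nontrivial case is settled by Theorem \ref{sylvester}. Writing $f(x,y)=\alpha xy+\beta yx$ with $\alpha,\beta\in K$ and taking $x=(a_1,b_1,m_1)$, $y=(a_2,b_2,m_2)\in T=A\oplus B\oplus M$, a direct expansion of the matrix product shows
\[
f(x,y)=\bigl(f(a_1,a_2),\; f(b_1,b_2),\; \alpha(a_1m_2+m_1b_2)+\beta(a_2m_1+m_2b_1)\bigr),
\]
which gives $f(T)\subseteq f(A)\oplus f(B)\oplus M$ immediately.

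For the reverse inclusion I would split on the value of $\alpha+\beta$. If $\alpha+\beta\neq 0$, then the identity $f(1_T,z)=(\alpha+\beta)z$ shows that $f$ is surjective on $T$ (and individually on $A$ and $B$), so both sides equal $T$ and there is nothing to prove. If $\alpha+\beta=0$ and $f\neq 0$, then up to a nonzero scalar $f$ is the commutator $[\cdot,\cdot]$, and the task reduces to showing that for every $a\in f(A)$, $b\in f(B)$, $m\in M$ there exist $x,y\in T$ with $[x,y]=(a,b,m)$.

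To handle the commutator case, I would fix $a_1,a_2\in A$ with $[a_1,a_2]=a$ and $b_1,b_2\in B$ with $[b_1,b_2]=b$, and then replace $a_1$ by $a_1+\lambda 1_A$ for a suitable $\lambda\in K$. This scalar shift does not alter $[a_1,a_2]$ but translates $\operatorname{Spec}(L_{a_1})$ by $\lambda$; since $\operatorname{Spec}(L_{a_1})$ and $\operatorname{Spec}(R_{b_1})$ are finite subsets of $\overline{K}$, only finitely many $\lambda\in K$ are forbidden, so the infinitude of $K$ yields a good choice making the two spectra disjoint. Theorem \ref{sylvester} then produces $m_2\in M$ with $a_1m_2-m_2b_1=m$, and with $m_1=0$ one directly verifies that $x=(a_1,b_1,0)$ and $y=(a_2,b_2,m_2)$ satisfy $[x,y]=(a,b,m)$. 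The main obstacle is precisely this last step: setting up the disjoint-spectra hypothesis of Theorem \ref{sylvester} without disturbing the prescribed values $[a_1,a_2]=a$ and $[b_1,b_2]=b$, and it is the scalar-shift freedom $a_1\mapsto a_1+\lambda 1_A$ (available because we are in the commutator case) together with $K$ being infinite that makes this work cleanly.
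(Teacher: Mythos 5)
Your proposal is correct and follows essentially the same route as the paper: dispose of the case $\alpha+\beta\neq 0$ via the unit, reduce to $f=[x,y]$, and use a scalar shift (you shift $a_1$ by $\lambda 1_A$, the paper shifts $b_1$ by $\alpha 1_B$ — an immaterial difference) to make the spectra disjoint so that Theorem \ref{sylvester} solves the Sylvester equation for the $M$-component. No gaps.
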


\begin{proof}
    If $f = \alpha xy + \beta yx$ with $\alpha + \beta \neq 0$, then $f(T) = T$, since $1 \in T$. So we may assume that $f = [x,y]$. Take $[a_1,a_2] + [b_1,b_2] + c \in f(A) \oplus f(B) \oplus M$. Let $m \in M$ be an arbitrary element and evaluate $x = a_1 + b_1$ and $y = a_2 + b_2 + m$. Then we have 
    \[
    [x,y] = [a_1,a_2] + [b_1,b_2] + a_1 m - m b_1. 
    \]
    Since $\operatorname{Spec}(L_{a_1})$ and $\operatorname{Spec}(R_{b_1})$ are finite and $K$ is infinite, there exists $\alpha \in K$ such that $\operatorname{Spec}(L_{a_1}) \cap \operatorname{Spec}(R_{b_1 + \alpha \cdot 1_B}) = \emptyset$. Taking $x = a_1 + b_1 + \alpha \cdot 1_B$ instead, we have
    \[
    [x,y] = [a_1,a_2] + [b_1,b_2] + a_1 m - m (b_1 + \alpha \cdot 1_B).
    \]
    Now it is enough to apply Theorem \ref{sylvester} to realize $c$ as $a_1 m - m (b_1 + \alpha \cdot 1_B)$ for some $m \in M$ (which depends on $c$). This shows that $f(A) \oplus f(B) \oplus M \subset f(T)$, and since the reverse inclusion is obvious, we are done. 
\end{proof}

% The following is an open question.
% \begin{problem}
%     Let $T = A \oplus B \oplus M$ be a finite-dimensional triangular algebra. Is the image of an arbitrary multilinear polynomial $f(x_1,\dots,x_n)$ on $T$ equal to $f(A) \oplus f(B) \oplus M$?
% \end{problem}

%%(Thiago): A pergunta acima é falsa, basta aplicar o polinômio $$[x,y][z,w]$ ao exemplo0. Por isso, incluí o texto abaixo.

This result provides a complete description of the image of multilinear polynomials of degree two on finite-dimensional triangular algebras. The question for higher degrees is significantly more complex, and as seen in the first example of this paper, the image of a polynomial identity may not simply be the direct sum of its images on the diagonal blocks and the off-diagonal bimodule.
For instance, in the algebra from Example \ref{example0}, the image of the polynomial $f(x_1,x_2,x_3,x_4)=[x_1,x_2][x_3,x_4]$ is strictly contained in $f(A)\oplus f(B)\oplus M$.

\section{Generalized block-triangular algebras}

Our goal in this section is to prove that the sum of two commutators is again a commutator for a class of algebras that includes, in particular, the block-triangular algebras. For this purpose, let us recall that if $A$ is an associative unital $K$-algebra, we can write $A = B \oplus \operatorname{rad}(A)$ where $B \simeq M_{n_1}(K) \oplus \cdots \oplus M_{n_r}(K)$. Also, from the nilpotency of $\operatorname{rad}(A)$, it follows that $1_A = 1_B$. Now, let $e_i \in M_{n_i}(K)$ be the $n_i \times n_i$ identity matrix, $i = 1, \dots, r$. Then
\[
1_A = 1_B = e_1 + \cdots + e_r.
\]
Moreover, $\{e_1, \dots, e_r\}$ is a set of orthogonal idempotents of $A$. We can therefore decompose the algebra $A$ as
\[
A = B \oplus \sum_{1 \leq i, j \leq r} e_i \operatorname{rad}(A) e_j.
\]
For the following definition, we denote $e_i \operatorname{rad}(A) e_j$ simply by $\operatorname{rad}(A)_{ij}$.

\begin{definition}\label{generalized block-triangular algebra}
    Let $K$ be an algebraically closed field of characteristic zero. A finite-dimensional $K$-algebra $A$ is called a \emph{generalized block-triangular algebra} if $\operatorname{rad}(A)_{ij} = 0$ for all $i \geq j$.
\end{definition}

The block-triangular matrix algebra $UT(d_1,\dots,d_m;K)$ is the most natural example of a generalized block-triangular algebra. The next example shows that Definition generalized block-triangular algebra includes a large class of triangular algebras.

\begin{example}\label{example of generalized block-triangular algebra}
Let $A_1$ and $A_2$ be semisimple algebras and $M$ be an $(A_1,A_2)$-bimodule, and consider the triangular algebra
    \[
    T = \begin{pmatrix}
        A_1 & M \\
        0 & A_2
    \end{pmatrix}.
    \] 
By the semisimplicity of both $A_1$ and $A_2$, we write
\[
1_{A_1} = e_1 + \cdots + e_k \quad \mbox{and} \quad 1_{A_2} = e_{k+1} + \cdots + e_r,
\]
where $\{e_i \in A_1 \mid i=1,\dots,k\}$ and $\{e_i \in A_2 \mid i=k+1,\dots,r\}$ are sets of orthogonal idempotents. Hence the unity $1$ of $T$ is such that
\[
1 = 1_{A_1} + 1_{A_2} = e_1 + \cdots + e_r,
\]
and moreover, we have the Peirce decomposition:
\[
T = 1_{A_1} T 1_{A_1} \oplus 1_{A_2} T 1_{A_2} \oplus 1_{A_1} T 1_{A_2}.
\]
Since $\operatorname{rad}(T) = 1_{A_1} T 1_{A_2}$, setting $B = 1_{A_1} T 1_{A_1} \oplus 1_{A_2} T 1_{A_2}$ we have that $B \simeq A_1 \oplus A_2$ is a semisimple subalgebra of $T$ and $T = B \oplus \operatorname{rad}(T)$. Finally, we notice that for the idempotents $e_1, \dots, e_r$,
\[
\operatorname{rad}(T)_{ij} = e_i \operatorname{rad}(T) e_j = e_i (1_{A_1} T 1_{A_2}) e_j.
\]
If $i \geq j$, then either both $e_i$ and $e_j$ belong to $A_1$, both belong to $A_2$, or $e_i \in A_2$ and $e_j \in A_1$. In all cases, at least one of $e_i 1_{A_1}$ or $1_{A_2} e_j$ is zero. Consequently, $\operatorname{rad}(T)_{ij} = 0$. We conclude that $T$ is a generalized block-triangular algebra. 
\end{example}

We are ready for the main result of the paper.

\begin{theorem}\label{main theorem}
    Let $A$ be a generalized block-triangular algebra. Then an element $a \in A$ is a commutator in $A$ if and only if $a$ has multitrace zero.   
\end{theorem}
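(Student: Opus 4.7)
The plan is to focus on the converse direction, since the forward implication is precisely Lemma \ref{commutator implies multitrace zero}. Fix a multitrace-zero element $a = b + n$ with $b \in B$ and $n \in \operatorname{rad}(A)$, and decompose $b = b_1 + \cdots + b_r$ under $B \simeq M_{d_1}(K) \oplus \cdots \oplus M_{d_r}(K)$, so that each $b_i$ has trace zero in $M_{d_i}(K)$. The strategy is to first produce $c, d \in B$ with $[c,d] = b$ using a spectrum-controlled version of the Albert-Muckenhoupt-Shoda theorem, and then absorb $n$ by replacing $d$ with $d + v$ for a suitable $v \in \operatorname{rad}(A)$ supplied by Theorem \ref{sylvester}.

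For the semisimple part, I would invoke the following sharpening of Albert-Muckenhoupt-Shoda: a trace-zero matrix in $M_n(K)$ can be written as $[c', d']$ where $c'$ is prescribed to have any given multiset of $n$ distinct eigenvalues. This is immediate from the classical reduction (trace zero implies similar to a zero-diagonal matrix, and a zero-diagonal matrix equals $[\operatorname{diag}(\lambda_1,\ldots,\lambda_n), d']$ whenever the $\lambda_i$ are distinct, by solving $b'_{ij} = (\lambda_i - \lambda_j) d'_{ij}$). Since $K$ is infinite I can therefore pick $c_i, d_i \in M_{d_i}(K)$ with $b_i = [c_i, d_i]$ and with the spectra $\operatorname{Spec}(c_1), \ldots, \operatorname{Spec}(c_r)$ pairwise disjoint. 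Setting $c := c_1 + \cdots + c_r$ and $d := d_1 + \cdots + d_r$ in $B$ yields $[c,d] = b$.

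For the radical part, the generalized block-triangular hypothesis gives $\operatorname{rad}(A) = \bigoplus_{i<j} \operatorname{rad}(A)_{ij}$, so I decompose $n = \sum_{i<j} n_{ij}$. Each $\operatorname{rad}(A)_{ij}$ is a finite-dimensional $(M_{d_i}(K), M_{d_j}(K))$-bimodule; as a left $M_{d_i}(K)$-module it is a direct sum of copies of the standard simple module $K^{d_i}$, so $\operatorname{Spec}(L_{c_i}) \subseteq \operatorname{Spec}(c_i)$ on $\operatorname{rad}(A)_{ij}$, and analogously $\operatorname{Spec}(R_{c_j}) \subseteq \operatorname{Spec}(c_j)$. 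By the disjointness arranged above, Theorem \ref{sylvester} produces $v_{ij} \in \operatorname{rad}(A)_{ij}$ solving $c_i v_{ij} - v_{ij} c_j = n_{ij}$. Setting $v = \sum_{i<j} v_{ij}$ and using $e_k v_{ij} = \delta_{ki} v_{ij}$, $v_{ij} e_k = \delta_{kj} v_{ij}$, the commutator $[c,v]$ collapses to $\sum_{i<j}(c_i v_{ij} - v_{ij} c_j) = n$, whence $[c, d+v] = [c,d] + [c,v] = b + n = a$.

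I expect the main obstacle to be the spectrum-controlled form of A-M-S used above: the bare statement of the Albert-Muckenhoupt-Shoda theorem gives no control over the spectrum of either factor, and without this flexibility the Sylvester-Rosenblum hypothesis $\operatorname{Spec}(L_{c_i}) \cap \operatorname{Spec}(R_{c_j}) = \emptyset$ cannot be arranged uniformly over all pairs $i<j$. Once this refinement is in hand, everything else is clean packaging: the generalized block-triangular condition splits the radical part into pieces living in bimodules to which Theorem \ref{sylvester} applies block-by-block.
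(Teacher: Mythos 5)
Your proof is correct, but it is organized differently from the paper's. The paper proceeds by induction on the number $r$ of simple components: it splits $A$ as $A_0 \oplus B_r \oplus \sum_{i<r}\operatorname{rad}(A)_{ir}$, realizes the $A_0$-part as a commutator by the induction hypothesis and the $B_r$-part by plain Albert--Muckenhoupt--Shoda, and then absorbs $\sum_{i<r}\operatorname{rad}(A)_{ir}$ by solving a single Sylvester equation at each stage. You instead give a direct, non-inductive argument: handle all $r$ blocks of the semisimple part at once and then solve one Sylvester equation per Peirce component $\operatorname{rad}(A)_{ij}$, $i<j$. The price of avoiding induction is that you need all the spectra $\operatorname{Spec}(c_1),\dots,\operatorname{Spec}(c_r)$ pairwise disjoint simultaneously, which you obtain from a spectrum-controlled refinement of Albert--Muckenhoupt--Shoda (correct, and your sketch via the zero-diagonal reduction is the standard one; note that in characteristic zero the scalar trace-zero case is trivial, so the reduction applies to every trace-zero matrix). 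The paper sidesteps any refined form of A--M--S by a cheaper device that would also work in your setup: since $e_i$ is central in $e_iBe_i$, replacing $c_i$ by $c_i+\mu_ie_i$ leaves $[c_i,d_i]$ unchanged while translating $\operatorname{Spec}(c_i)$ by $\mu_i$, so generic scalars $\mu_i$ force pairwise disjointness without controlling the spectra in the A--M--S step itself. Your observation that $\operatorname{Spec}(L_{c_i})\subseteq\operatorname{Spec}(c_i)$ on $\operatorname{rad}(A)_{ij}$ because a unital left $M_{d_i}(K)$-module is a sum of copies of $K^{d_i}$ is a nice explicit justification that the paper leaves implicit. Both arguments rest on the same two pillars (A--M--S and Theorem \ref{sylvester}); yours yields a slightly more informative conclusion (a single $x=c$ with prescribed semisimple spectrum works for the whole commutator), while the paper's induction keeps each step down to one application of the Sylvester theorem.
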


\begin{proof}
By Lemma \ref{commutator implies multitrace zero}, every commutator in $A$ has multitrace zero. For the converse, let $A  = B \oplus \operatorname{rad}(A)$, be the Wedderburn-Malcev decomposition of $A$. We proceed by induction on the number $r$ of simple components of $B$. 

If $r = 1$, we note that $A = B \simeq M_{d_1}(K)$, since in this case $e_1 = 1_A$ and then $\operatorname{rad}(A) = \operatorname{rad}(A)_{11} = 0$. The result now follows from the classical Albert-Muckenhoupt-Shoda Theorem.

Now assume $r > 1$ and that the result holds for every generalized block-triangular algebra with fewer than $r$ simple components in its semisimple part.

Let $B_0$ be the subalgebra of $B$ isomorphic to $M_{d_1}(K) \oplus \cdots \oplus M_{d_{r-1}}(K)$ and denote by $\operatorname{rad}(A)_0$ the ideal $\sum_{1 \leq i < j \leq r-1} \operatorname{rad}(A)_{ij}$ of $A$. Then $A_0 = B_0 \oplus \operatorname{rad}(A)_0$ is a subalgebra of $A$, and moreover,
\[
A = A_0 \oplus B_r \oplus \sum_{i=1}^{r-1} \operatorname{rad}(A)_{ir},
\]
where $B_r$ is the subalgebra of $B$ isomorphic to $M_{n_r}(K)$. We claim that $A_0$ is itself a generalized block-triangular algebra. To prove this, first note that $B_0$ is semisimple. Second, $\operatorname{rad}(A)_0$ coincides with the Jacobson radical of $A_0$. Clearly, $\operatorname{rad}(A)_0$ is a nilpotent ideal of $A_0$. It is also maximal with this property because for any other nilpotent ideal $I$ of $A_0$, the image $\pi(I)$ under the projection $\pi: A_0 \to B_0$ is a nilpotent ideal of the semisimple algebra $B_0$, forcing $\pi(I) = 0$; hence $I \subset \operatorname{rad}(A)_0$. Finally, since $1_{B_0} = e_1 + \cdots + e_{r-1}$, we get for all $k \geq l$ that
\[
e_k \operatorname{rad}(A)_0 e_l = \sum_{1 \leq i < j \leq r-1} e_k e_i \operatorname{rad}(A) e_j e_l = 0,
\]
because if $e_k e_i \neq 0$ then $k = i$, and if $e_j e_l \neq 0$ then $j = l$, but then $i = k \geq l = j$, which contradicts the condition $i < j$ in the sum. This proves the claim.

Now let $a \in A$ be a multitrace zero element. Write $a = b + j$ where $b \in B$ is mapped to $\mathfrak{sl}_{n_1}(K) \oplus \cdots \oplus \mathfrak{sl}_{n_r}(K)$ and $j \in \operatorname{rad}(A)$. Decompose $b = b_0 + c$ where $b_0$ is a sum of traceless matrices in $M_{n_1}(K) \oplus \cdots \oplus M_{n_{r-1}}(K)$ and $c$ is a traceless matrix in $M_{n_r}(K)$. Therefore, we can write $a = b_0 + j_0 + c + j_r$ for some $j_0 \in \operatorname{rad}(A)_0$ and $j_r \in \sum_{i=1}^{r-1} \operatorname{rad}(A)_{ir}$.

Since $a_0 = b_0 + j_0 \in A_0$ is a multitrace zero element of $A_0$, the induction hypothesis implies $a_0 = [a_0', a_0'']$ for some $a_0', a_0'' \in A_0$. By the Albert-Muckenhoupt-Shoda Theorem, we can write $c = [c', c'']$ for some $c', c'' \in B_r$. Set $x = a_0' + c'$. To finish the proof, we will show that there exists $z \in \sum_{i=1}^{r-1} \operatorname{rad}(A)_{ir}$ such that $y = a_0'' + c'' + z$ satisfies $a = [x, y]$.

Notice that if such a $z$ exists, then
\begin{align*}
[x, y] &= [a_0' + c', a_0'' + c'' + z] \\
&= [a_0', a_0''] + [c', c''] + a_0' z - z c' \\
&= a_0 + c + (a_0' z - z c').
\end{align*}

Since $a = a_0 + c + j_r$, it is sufficient to find $z$ such that
\begin{equation}\label{sylvesteq}
j_r = a_0' z - z c'.
\end{equation}

We view equation \eqref{sylvesteq} as a Sylvester equation in the variable $z$, which lies in the $(B_0, B_r)$-bimodule $\sum_{i=1}^{r-1} \operatorname{rad}(A)_{ir}$. By Theorem \ref{sylvester}, a unique solution exists if the linear operators $L_{a_0'}$ and $R_{c'}$ on this bimodule have disjoint spectra. This might not be true in general, but since $K$ is infinite, we can choose $\lambda \in K$ such that $L_{a_0' + \lambda \cdot 1}$ and $R_{c'}$ have disjoint spectra. Note that $a_0 = [a_0' + \lambda \cdot 1, a_0'']$ still holds. Redefining $x = a_0' + \lambda \cdot 1 + c'$, we can then apply Theorem \ref{sylvester} to solve the equation $j_r = (a_0' + \lambda \cdot 1) z - z c'$ for $z$. With this $z$, we obtain $a = [x, y]$ as desired.
\end{proof}

From Theorem \ref{main theorem} and the fact that the sum of multitrace zero elements is again a multitrace zero element, we have the following consequence.

\begin{corollary}
    In a generalized block-triangular algebra, the sum of two commutators is again a commutator.
\end{corollary}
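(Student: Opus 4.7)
The plan is to chain the two implications of Theorem \ref{main theorem} together with the (essentially trivial) observation that the set of multitrace zero elements is closed under addition. Given two commutators $c_1, c_2 \in c(A)$ in the generalized block-triangular algebra $A$, the forward implication of Theorem \ref{main theorem} (which is Lemma \ref{commutator implies multitrace zero}) yields $\operatorname{mtr}(c_1) = 0$ and $\operatorname{mtr}(c_2) = 0$. I would then show that $\operatorname{mtr}(c_1 + c_2) = 0$ and invoke the converse direction of Theorem \ref{main theorem} to conclude that $c_1 + c_2$ is itself a commutator.

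The only step that requires a moment's thought is additivity of the multitrace-zero property. Fix the Wedderburn-Malcev decomposition $A = B \oplus \operatorname{rad}(A)$ with $B \simeq M_{d_1}(K) \oplus \cdots \oplus M_{d_r}(K)$, and write $c_i = b^{(i)} + j^{(i)}$ with $b^{(i)} \in B$, $j^{(i)} \in \operatorname{rad}(A)$, for $i = 1,2$. The semisimple part of $c_1 + c_2$ is $b^{(1)} + b^{(2)}$, whose image under the Wedderburn-Artin isomorphism is $(b^{(1)}_1 + b^{(2)}_1, \dots, b^{(1)}_r + b^{(2)}_r)$. Since the ordinary trace on each block $M_{d_k}(K)$ is linear, $\operatorname{tr}(b^{(1)}_k + b^{(2)}_k) = \operatorname{tr}(b^{(1)}_k) + \operatorname{tr}(b^{(2)}_k) = 0$ for every $k$, so $\operatorname{mtr}(c_1 + c_2) = 0$.

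I do not expect any genuine obstacle here — all the substantive work is absorbed into Theorem \ref{main theorem}. The role of the corollary is simply to record that the characterization ``commutator $\iff$ multitrace zero'' is a \emph{linear} condition, and therefore $c(A)$ is not merely closed under addition but is in fact a $K$-vector subspace of $A$, as already foreshadowed in the discussion following Example \ref{example0}.
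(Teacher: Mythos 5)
Your argument is correct and is exactly the paper's own: the corollary is stated there as an immediate consequence of Theorem \ref{main theorem} together with the observation that the sum of multitrace zero elements has multitrace zero, which you verify via linearity of the trace on each simple component. Nothing further is needed.
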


It is also a consequence of Theorem \ref{main theorem} that $\operatorname{rad}(A)$ is contained in the set $c(A)$ of commutators of $A$ for generalized block-triangular algebras. More generally, we have $e_i \operatorname{rad}(A) e_j \subset c(A)$ for $i \neq j$, since $e_i a e_j = [e_i a e_j, e_j]$ for all $a \in \operatorname{rad}(A)$. On the other hand, if $i = j$, it is not always true that $e_i \operatorname{rad}(A) e_i \subset c(A)$, as shown in the example below. This example also provides an instance of a finite-dimensional associative algebra with multitrace zero elements that are not commutators.

\begin{example}
     Let $A = M_2(K)[x]$ be the algebra of polynomials in one indeterminate $x$ with coefficients in $M_2(K)$, where $x$ commutes with the elements of $M_2(K)$. Let $I$ be the ideal of $A$ generated by $x^2$. Then
    \[
    A/I = \{a + b x + I \mid a, b \in M_2(K)\}.
    \]
    Notice that 
    \[
    A/I = B \oplus \operatorname{rad}(A/I),
    \]
    where $B = M_2(K)$ and $\operatorname{rad}(A/I) = \{b x + I \mid b \in M_2(K)\}$. Moreover, $A/I$ is an $8$-dimensional unital associative algebra. Since $B$ is simple, we set $e_1 = 1$, and then $e_1 \operatorname{rad}(A/I) e_1 = \operatorname{rad}(A/I)$. On the other hand, we have that 
    \[
    [A/I, A/I] \subset \mathfrak{sl}_2(K) + \mathfrak{sl}_2(K) x + I.
    \]
    This shows that $e_1 \operatorname{rad}(A/I) e_1 = \operatorname{rad}(A/I) \not\subset [A/I, A/I]$.
\end{example}

\section{Final remarks}

It is a natural question to measure how different the sets $c(A)$ and $A$ are. An interesting measure for this is the dimension of the quotient space $A / [A, A]$. Recall that $[A, A]$ denotes the linear span of the set of commutators $c(A)$. If $c(A)$ is a vector space, it coincides with $[A, A]$. It is worth mentioning that in the paper \cite{KB_commutant}, the author proves that if $A$ is a PI-algebra, then $A\neq [A,A]$.

\begin{proposition}\label{dimension bound}
    Let $A$ be a finite-dimensional associative unital algebra over an algebraically closed field $K$, and let $r$ be the number of simple components in the Wedderburn decomposition of the semisimple algebra $A/\operatorname{rad}(A)$ (i.e., $A/\operatorname{rad}(A) \cong M_{n_1}(K) \times \cdots \times M_{n_r}(K)$). Then
    \[
    \dim_K \dfrac{A}{[A, A]} \geq r.
    \]
\end{proposition}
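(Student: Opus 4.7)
The plan is to exhibit $r$ linear functionals on $A/[A,A]$ that are linearly independent. Writing $\bar A = A/\operatorname{rad}(A) \cong M_{n_1}(K) \oplus \cdots \oplus M_{n_r}(K)$, let $\pi \colon A \to \bar A$ be the canonical projection, $p_i \colon \bar A \to M_{n_i}(K)$ the $i$-th coordinate projection, and $\operatorname{tr}_i$ the standard matrix trace on $M_{n_i}(K)$. For $i = 1, \dots, r$, I would define the linear functionals
\[
\varphi_i = \operatorname{tr}_i \circ p_i \circ \pi : A \longrightarrow K.
\]

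The next step is to verify that each $\varphi_i$ vanishes on $[A,A]$. Since $\pi$ and $p_i$ are algebra homomorphisms, they send elements of $c(A)$ to commutators in $M_{n_i}(K)$, and the trace of any commutator in $M_{n_i}(K)$ is zero. By linearity, $\varphi_i$ vanishes on the whole subspace $[A,A]$, so each $\varphi_i$ descends to a well-defined linear functional $\bar\varphi_i \colon A/[A,A] \to K$.

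To finish, I would show that $\bar\varphi_1, \dots, \bar\varphi_r$ are linearly independent, which immediately gives $\dim_K A/[A,A] \geq r$. For each $i$, choose a lift $u_i \in A$ under $\pi$ of the element of $\bar A$ whose $i$-th block is the matrix unit $e_{11} \in M_{n_i}(K)$ and whose other blocks are zero. A direct computation gives $\varphi_j(u_i) = \delta_{ij}$, so the matrix $\bigl[\bar\varphi_j(u_i + [A,A])\bigr]_{i,j}$ is the $r\times r$ identity, proving linear independence.

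There is no real obstacle: the argument is a standard trace-pairing construction. The only delicate point is the choice of witnesses $u_i$. One is tempted to lift the block identities $e_1, \dots, e_r$ of the Wedderburn decomposition, but that would produce the evaluation matrix $\operatorname{diag}(n_1, \dots, n_r)$, which can fail to be invertible when $\operatorname{char}(K)$ divides some $n_i$. Lifting rank-one matrix units instead sidesteps this issue and makes the proof work in arbitrary characteristic, matching the generality of the statement.
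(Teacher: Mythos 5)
Your proof is correct and is essentially the dual formulation of the paper's argument: the paper passes to the surjection $A/[A,A] \to (A/\operatorname{rad}(A))/[A/\operatorname{rad}(A),A/\operatorname{rad}(A)] \cong K^r$, while you exhibit the $r$ coordinate functionals $\operatorname{tr}_i \circ p_i \circ \pi$ of that same map and check their independence directly; both hinge on the single fact that traces kill commutators in $M_{n_i}(K)$. Your observation about lifting $e_{11}$ rather than the block identities $e_i$ is a sensible precaution in positive characteristic, though the paper's version sidesteps it automatically since $\operatorname{tr}(e_{11})=1$ already shows each $M_{n_i}(K)/\mathfrak{sl}_{n_i}(K)$ is exactly one-dimensional in any characteristic.
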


\begin{proof} The proof follows the argument of  \cite[Theorem 7.17]{Lam}. 
    Let $\pi: A \to A/\operatorname{rad}(A)$ be the natural projection map. This map induces a well-defined linear map between the quotient spaces:
    \[
    \overline{\pi}: \dfrac{A}{[A, A]} \to \dfrac{A/\operatorname{rad}(A)}{[A/\operatorname{rad}(A), A/\operatorname{rad}(A)]},
    \]
    since $\pi([A, A]) \subseteq [\pi(A), \pi(A)] = [A/\operatorname{rad}(A), A/\operatorname{rad}(A)]$.

    Since $K$ is algebraically closed, the Wedderburn-Artin Theorem gives:
    \[
    A/\operatorname{rad}(A) \cong M_{n_1}(K) \times \cdots \times M_{n_r}(K).
    \]
    By the Albert-Muckenhoupt-Shoda Theorem, $[M_{n_i}(K), M_{n_i}(K)] = \mathfrak{sl}_{n_i}(K)$, and thus $M_{n_i}(K) / [M_{n_i}(K), M_{n_i}(K)]$ is one-dimensional. Therefore,
    \[
    \dfrac{A/\operatorname{rad}(A)}{[A/\operatorname{rad}(A), A/\operatorname{rad}(A)]} \cong K^r.
    \]
    In particular, $\dim_K \left( \dfrac{A/\operatorname{rad}(A)}{[A/\operatorname{rad}(A), A/\operatorname{rad}(A)]} \right) = r$.

    Since $\overline{\pi}$ is a surjective linear map, we conclude that
    \[
    \dim_K \left( \dfrac{A}{[A, A]} \right) \geq r.
    \]
\end{proof}

\begin{remark}
The inequality in Proposition \ref{dimension bound} is not always an equality. For instance, in the algebra from Example \ref{example0}, we have $r = 2$ but $\dim_K A/[A, A] = 3$.
\end{remark}

\begin{remark}\label{remark sharp}
The main result of this paper, Theorem \ref{main theorem}, shows that for generalized block-triangular algebras, the set of commutators is exactly the set of multitrace zero elements. As a consequence, $\dim A / [A,A] = r$, i.e., for this class of algebras, the lower bound in Proposition \ref{dimension bound} is sharp. 
This provides a large class of algebras for which the natural map $\overline{\pi}$ is an isomorphism (of vector spaces).
\end{remark}

\begin{remark}
    In the paper \cite{AmitsurRowen}  the authors define the notion of reduced trace of an element in a finite dimensional central simple $K$-algebra $A$ (where $K$ is not necessarily algebraically closed), namely the reduced trace of $a\in A$ is the trace of the matrix corresponding to the element $a\otimes 1$ in $M_n(K)\otimes_K \overline K$. There, they prove that if $A$ has reduced trace zero then it is a sum of at most 2 commutators, and it is a commutator if $A$ is not a division ring. We can naturally define the notion of reduced multitrace for an arbitrary element on a finite dimensional unital $K$-algebra, for an arbitrary $K$ and investigate commutators in such algebras. Such problems will be investigated in future projects.
\end{remark}

\section*{Acknowledgments}

This work was supported by CNPq - Brazil (grant \#405779/2023-2) and  Fapesp - Brazil (grants \#2024/19129-8, \#2024/14914-9 and \#2025/02457-5).

\end{document}